\newtheorem{theorem}{Theorem}[section]
\newtheorem{lemma}[theorem]{Lemma}
\newtheorem{proposition}[theorem]{Proposition}
\newtheorem*{setting S}{Setting (S)}
\theoremstyle{definition}
\newtheorem*{notation}{Notation}
\newtheorem*{rk}{Remark}
\theoremstyle{remark}
\numberwithin{equation}{section}
\newcommand{\Z}{\mathbb{Z}}
\newcommand{\Q}{\mathbb{Q}}
\newcommand{\C}{\mathbb{C}}
\newcommand{\R}{\mathbb{R}}
\newcommand{\cl}{\operatorname{Cl}_3}
\newcommand{\mco}{\mathcal{O}}
\newcommand{\keywords}[1]{\noindent\textbf{Keywords:} #1}
\newcommand{\subjclass}[2]{\noindent\textbf{#1 Mathematics Subject Classification:} #2.}
\title{There are consecutive cubic fields with large class numbers, \\ when ordered by discriminant}
\author[1]{V\'{i}t\v{e}zslav Kala}
\author[2]{Om Prakash}
\affil[1,2]{Charles University, Faculty of Mathematics and Physics, Department of Algebra, Sokolovsk\'{a} 83, 186 75 Praha 8, Czech Republic}
\affil[1]{E-mail: vitezslav.kala@matfyz.cuni.cz. ORCID: 0000-0001-5515-6801}
\affil[2]{E-mail: prakash@karlin.mff.cuni.cz. ORCID: 0009-0007-2124-9736}
\begin{document}
\maketitle

\begin{abstract} 
We consider cubic number fields ordered by their discriminants, and show that
there exist arbitrarily long sequences that contain only fields with class numbers greater than a given bound.

\medskip
\keywords{class number, cubic number field, cubic discriminant, genus number}
	 
\medskip
\subjclass{2020}{11R29, 11R16, 11B05, 11B25, 11R45}

\medskip

\noindent\textbf{Funding:}
Both authors were supported by {Czech Science Foundation} grant 26-20514S. 
O.P. was supported by {Charles University} programmes PRIMUS/24/SCI/010 and PRIMUS/25/SCI/017, SVV-2023-260721, and GAUK project No. 252931.
\end{abstract}

\section{Introduction}

Understanding the behavior and distribution of the class number $h(K)$ of a  number field $K$ is one of the fundamental problems in number theory. 
This study has a rich history that begins with Gauss who investigated the class number of quadratic forms. 
In $1801$, Gauss conjectured that $h(\Q(\sqrt{d}))\rightarrow \infty$ as $d\rightarrow -\infty$, and posed the problem of finding all imaginary quadratic fields with a given class number. Heilbronn established the growth of $h(\Q(\sqrt{d}))$ in 1934 and then Heegner characterized all imaginary quadratic fields with class number one in 1952. Now, thanks to the work of Watkins \cite{wat}, we know all imaginary quadratic fields with class numbers up to $100$. 

The class numbers of real quadratic number fields are less well understood than those of their imaginary counterparts. 
The principal tool is the class number formula
\[
h(K) = \frac{w_K \cdot \sqrt{|\Delta_K|}}{2^{r_1} (2\pi)^{r_2} \text{Reg}_K} \cdot \left| \text{Res}_{s=1} \zeta_K(s) \right|,
\]
where $r_1,r_2$ represents the number of real and complex embeddings, $w_K$ denotes the number of roots of unity in $K$, $\text{Reg}_K$ is the regulator, and $\zeta_K$ is the Dedekind zeta function of $K$.  
The first difficulty in applying the class number formula 
is that the unpredictable behavior of the regulator makes it difficult to understand the size of $h(K)$.

In particular, another conjecture of Gauss that there are infinitely many real quadratic fields with class number $1$ is still an open problem. 
It is expected that the size of the regulator is fairly often around $\sqrt{d}$; consequently, $h(K)$ should be often small. Further, it seems counterintuitive that one could find an arbitrarily long sequence of consecutive real quadratic fields, all having large class numbers. However, recently,  Cherubini--Fazzari--Granville--Kala--Yatsyna \cite{che+} showed that for a given positive integer $k$ and $X\in\R_{>0}$, there are $\geq X^{1/2-o(1)}$ integers $d\leq X$ such that all real quadratic fields $\Q(\sqrt{d+1}), \Q(\sqrt{d+2}),\dots, \Q(\sqrt{d+k})$ have class numbers essentially as large as possible.

Let $K$ be a general cubic field. The discriminant $\Delta_K$ of $K$ takes the form $df^2, 9df^2, \text{ or }81df^2$, where $d$ is a fundamental discriminant (i.e., $d=1$ or $d$ is the discriminant of a quadratic field) and $f$ is a squarefree positive integer that is coprime to $3$ \cite[Satz $6$]{Has}. The cubic field $K$ is Galois precisely when $\Delta_K=f^2$. It is clear that not every integer $n$ arises as the discriminant of a cubic field. In particular, for $p>3$, there are no cubic discriminants divisible by $p^3$. Throughout, when we state that $\Delta$ is a \emph{cubic discriminant}, we mean that there is a cubic field whose discriminant is equal to $\Delta$.

A number of results on class numbers of cubic fields are known in special families, such as simplest cubic fields or pure cubic fields. Working with Shanks' family \cite{sha} of simplest abelian cubic fields, Duke \cite{Duk} showed that there exists an absolute constant $c>0$ such that 
\[
h(K)>c\sqrt{\Delta_K} (\log\log \Delta_K/\log \Delta_K)^2
\]
holds for infinitely many abelian cubic fields $K$.
Recently, Dousselin \cite{dou}, using \cite{gs}, showed that there are at least $X^{1/4-o(1)}$ abelian cubic fields in Shanks' family with discriminants $\Delta_K\leq X$ such that 
\[
h(K)\geq \left(\frac{4}{91}e^{2\gamma}+o(1)\right)\sqrt{\Delta_K} (\log\log \Delta_K/\log \Delta_K)^2.
\]
Moreover, in the same paper, he proved that there exist arbitrarily long sequences of abelian cubic fields whose discriminants are close, with class numbers essentially as large as possible. Also recently, Yhee--Byeon \cite{dy} studied the class numbers of consecutive pure cubic fields and established that for a given $k>0$, there are at least $X^{1/3-o(1)}$ integers $1<d<X$ such that $h(\Q(\sqrt[3]{d+j}))$ for all $1\leq j\leq k$ are large. This work was then extended to higher degrees by Das--Krishnamoorthy \cite{dk}.

Inspired by these results, we study class numbers of cubic fields with consecutive discriminants, without imposing  any restrictions on the cubic fields, such as requiring them to be Galois, totally real, or to belong to specific families. As our main result, we prove the following theorem. Throughout the article, $\kappa\approx 0.3193\dots$ is the constant from \cite{ck}.

\begin{theorem}\label{main:thm}
    Fix a sign choice $\pm$, $\varepsilon>0$, and positive integers $k, H$. There exists a positive constant $c$ (depending on $\pm, \varepsilon, k, H$) such that for all sufficiently large $X$, there are at least $cX^{1-\kappa-\varepsilon}$ positive integers $d\leq X$ such that the following hold:
    \begin{enumerate}
        \item For each $1\leq i\leq k$, if there exists a cubic field $K$ with $\pm\Delta_K=d+i$, then $h(K)>H$.

        \item We have 
        \[
        \#\{K \text{ cubic field }\mid \pm \Delta_K\in\{d+1,d+2,\dots,d+k\}\}\geq \frac{k}{2}.
         \]
         
    \end{enumerate}
\end{theorem}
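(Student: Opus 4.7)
The overarching strategy, in analogy with the real quadratic case treated in \cite{che+}, is to force every cubic field with discriminant $\pm(d+i)$ (for $1 \le i \le k$) to carry large $\ell$-torsion in its class group, by constructing $d$ so that each $d+i$ has many prime factors of a prescribed arithmetic type. The exponent $1-\kappa-\varepsilon$ then emerges from \cite{ck}, which controls how densely cubic discriminants sit in arithmetic progressions.

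\emph{Step 1: Class-number lower bound.} The first ingredient is a lemma of the form: there exist $r=r(H)\in\mathbb{N}$ and a set $\Pi$ of primes (for instance those $\equiv 1\pmod 3$, or in a prescribed Chebotarev class in some auxiliary field) such that any cubic field $K$ with at least $r$ prime divisors of $\Delta_K$ in $\Pi$ satisfies $h(K)>H$. Since a cubic field does not itself admit a genus theory of quadratic-field type, I would route through the quadratic resolvent $F=\mathbb{Q}(\sqrt{\Delta_K})$ and the Galois closure $L=KF$: the $S_3$-module decomposition of $\mathrm{Cl}(L)\otimes\mathbb{F}_\ell$ (for suitable $\ell$) relates the $\ell$-ranks of $\mathrm{Cl}(K)$, $\mathrm{Cl}(F)$, and $\mathrm{Cl}(L)$, while classical genus theory lower-bounds $\mathrm{rk}_2\mathrm{Cl}(F)$ by $\omega(\Delta_K)-O(1)$ and Scholz reflection transfers $3$-rank information between $\mathbb{Q}(\sqrt{\Delta_K})$ and $\mathbb{Q}(\sqrt{-3\Delta_K})$ down to $\mathrm{Cl}(K)$.

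\emph{Steps 2 and 3: CRT and counting.} Given $r$ and $\Pi$ from Step~1, pick $kr$ distinct primes $p_{i,j}\in\Pi$ and impose by CRT that $d+i\equiv 0\pmod{p_{i,1}\cdots p_{i,r}}$ for each $i=1,\ldots,k$, together with further local conditions at $3$ and at other small primes needed so that $d+i$ can arise in one of Hasse's shapes $df^2$, $9df^2$, or $81df^2$. Every such $d$ automatically satisfies condition~(1) of the theorem. It remains to verify condition~(2): a positive polynomial proportion of these $d\le X$ should have at least $k/2$ of $\{d+1,\ldots,d+k\}$ realized as cubic discriminants of the prescribed sign. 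This is exactly where the Cohen–Kaplan constant $\kappa$ enters: feeding their density of cubic discriminants in arithmetic progressions into a lower-bound sieve (Selberg- or Iwaniec-style) yields the claimed count $\gg X^{1-\kappa-\varepsilon}$.

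\emph{Main obstacle.} The principal challenge is Step~1: producing a clean genus- or class-field-theoretic lower bound on $h(K)$ for an arbitrary, possibly non-Galois, cubic field in terms of the prime factorization of $\Delta_K$. Earlier results in this direction (Shanks' family, pure cubic fields, etc.) exploit explicit structure unavailable in the general setting. A secondary subtlety will be checking that the congruence conditions imposed in Step~2 remain compatible with the cubic-discriminant density from \cite{ck}, so that the Euler-factor losses are absorbed into the $\varepsilon$ and the exponent $1-\kappa$ survives.
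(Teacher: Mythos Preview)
Your sketch has the right high-level shape (CRT to force local conditions, then count), but two of the three steps rest on misidentifications that would block the proof.

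\textbf{Step 1.} Your claim that ``a cubic field does not itself admit a genus theory of quadratic-field type'' is false, and this is precisely the tool the paper uses. Fr\"ohlich's theorem gives the genus number of a cubic field $K$ as $g_K=3^e$ (or $3^{e-1}$ if $K/\Q$ is Galois), where $e$ counts odd primes $p$ totally ramified in $K$ with $\bigl(\tfrac{d}{p}\bigr)=1$; moreover $g_K\mid h(K)$. So the lower bound on $h(K)$ comes directly, with no detour through the quadratic resolvent, the $S_3$-module structure of $\mathrm{Cl}(L)$, or Scholz reflection. Your proposed route is not only circuitous but does not obviously land: genus theory for $F=\Q(\sqrt{\Delta_K})$ controls $\mathrm{rk}_2\,\mathrm{Cl}(F)$, and there is no clean transfer of that to $h(K)$.

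\textbf{Step 2.} Imposing merely $p_{i,j}\mid d+i$ is not enough. For $p\neq 3$ to be totally ramified in a cubic field of discriminant $\Delta$ one needs $p^2\parallel\Delta$, so the CRT must pin down $d+i$ modulo $p_{i,j}^3$ (to force $p_{i,j}^2\parallel d+i$), together with an auxiliary prime $q_i$ and congruence conditions $p_{i,j}\equiv 1\pmod{12}$ to guarantee the Legendre-symbol hypothesis in Fr\"ohlich's theorem.

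\textbf{Step 3 and the exponent $\kappa$.} You have the source and the mechanism of $\kappa$ wrong. It is not a density-of-discriminants constant, and the reference \cite{ck} is Chan--Koymans, not ``Cohen--Kaplan.'' The density of cubic fields in an arithmetic progression is handled by Taniguchi--Thorne and is \emph{positive}, contributing no loss in the exponent. The exponent $\kappa$ enters because Chan--Koymans bound $\#\cl(\Q(\sqrt{d}))\ll_\varepsilon d^{\kappa+\varepsilon}$, which via Hasse bounds the number of cubic fields sharing a given discriminant by $\ll_\varepsilon\Delta^{\kappa+\varepsilon}$. The passage from ``many cubic fields in each column of the progression'' to ``many $d$ with at least $k/2$ discriminants realized in the window'' is then a Maier-matrix pigeonhole: rows with too few fields cannot absorb all the column mass, and each good row contributes at most $\ll X^{\kappa+\varepsilon}$ fields by the multiplicity bound, forcing $\gg X^{1-\kappa-\varepsilon}$ good rows. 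A Selberg/Iwaniec sieve is neither needed nor obviously applicable here.
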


Note that here and throughout the paper, we fix a choice of the sign $\pm$ of the cubic discriminants. Thus, when we say that $\pm\Delta_K=d+i$, we mean that either $+\Delta_K=d+i$, or $-\Delta_K=d+i$, depending on the choice. 

We will prove this result in Theorem \ref{thm:final}. Our proof is constructive; in fact, we first construct integers $0\leq a< m$ such that whenever there exists a cubic field $K$ with $\pm\Delta_K=d+i$, where $d\equiv a\pmod m$, then $h(K)>H$. We obtain this by controlling the \textit{genus number} which gives a lower bound for the size of the 2-torsion in the class group of $K$. The reason for this approach is that it sidesteps the difficult issue of estimating the regulator (thanks to which we do not need to restrict ourselves to a suitable family of cubic fields). At the same time, it makes it impossible for us to obtain class numbers that would be as large as allowed by the class number formula -- we only show that the class numbers are larger than some prescribed bound.
We then show that indeed there are many cubic fields with $ \pm \Delta_K\in\{d+1,d+2,\dots,d+k\}$, which also ensures that our first assertion is not vacuous.

Throughout this article, we count cubic fields up to isomorphism. 
Note that if we view cubic fields as subfields of $\C$, then 
a given cubic field $K$ has either 1 or 3 isomorphic copies in $\C$, depending on whether $K$ is Galois or not (but in either case, we count such copies as only 1 cubic field).

Davenport--Heilbronn \cite{dh} studied the counting function 
\[
N_3^\pm(X)=\#\{K \text{ cubic field }\mid 0<\pm \Delta_K<X\},
\]
and, using Delone--Faddeev correspondence \cite{df}, showed that 
\[
N_3^+(X)\sim \frac{1}{12\zeta(3)}X, \quad N_3^-(X)\sim \frac{1}{4\zeta(3)}X \quad \text{ as } X\rightarrow\infty.
\]
The first power saving error term was established by Belabas--Bhargava--Pomerance \cite{bbp}. The asymptotic formula was further strengthened in \cite{TT, bst, BTT} by incorporating the secondary term, which was conjectured by Roberts \cite{rob}, and an improved error term. 

To count $d\leq X$ in our Theorem \ref{main:thm}, we need to switch between counting cubic fields and cubic discriminants. Unlike quadratic fields, non-isomorphic cubic fields can have the same discriminant. For example, Taussky–Scholz \cite{ts} proved that there are four non-isomorphic cubic fields with discriminant $-3299$.
Thus, when counting cubic fields, we count discriminants with multiplicity.

By Hermite's Theorem, for any fixed integer $\Delta>1$, there are only finitely many number fields $K$ of degree $n$ with $|\Delta_K|=\Delta$ (see \cite[Section $4.1$]{ser}). Duke \cite[Section $3$]{duke} and Ellenberg–Venkatesh \cite[Conjecture $1.3$]{ev05} proposed the following conjecture regarding the number of fields with discriminant equal to $\Delta$: \textit{For each $n\geq 2$, for every $\varepsilon>0$, and for every integer $\Delta\geq 1$ there are $\ll_{n,\varepsilon}\Delta^\varepsilon$ number fields $K$ of degree $n$ with $|\Delta_K|=\Delta$.}

For $n=2$, this conjecture is trivially true, as there is at most one quadratic field of a given discriminant. However, for higher values of $n$, we only have partial results (for further exposition, see the article of Pierce--Turnage-Butterbaugh--Wood \cite{ptw}). For cubic fields, using \cite[Satz $7$]{Has} and the bound on the $3$-torsion part of the class group of quadratic fields, Ellenberg--Venkatesh \cite[Corollary $3.7$]{ev} showed that there are $\ll_\varepsilon |\Delta_K|^{1/3+\varepsilon}$ cubic fields $K$ of discriminant $\Delta_K$. If we employ the most recent bound of Chan--Koymans \cite{ck} on the $3$-torsion part of quadratic fields $\#\cl(\Q(\sqrt{d}))\ll_\varepsilon d^{\kappa+\varepsilon}$, then this implies that there are $\ll_\varepsilon|\Delta_K|^{\kappa+\varepsilon}$ cubic fields of discriminant $\Delta_K$.

Let us also mention that we can obtain better bounds on the number of $d\leq X$ in Theorem \ref{main:thm} by assuming some standard conjectures. Specifically, assuming  the Hasse–Weil conjecture,
the refined Birch and Swinnerton-Dyer conjecture, and the generalized Riemann hypothesis for $L$-functions
of elliptic curves over quadratic fields, Shankar–Tsimerman \cite[Theorem $1$]{st} showed that $\#\cl(\Q(\sqrt{d}))\ll_\varepsilon d^{\frac{1}{4}+\varepsilon}$. Therefore, by \cite[Satz $7$]{Has}, it follows that there are $\ll_\varepsilon \Delta_K^{\frac{1}{4}+\varepsilon}$ cubic fields with discriminant $\pm \Delta_K$. In this case, our proof yields that there are at least $cX^{\frac{3}{4}-\varepsilon}$ integers $d\leq X$ such that the assertions 1. and 2. of Theorem \ref{main:thm} hold.

\begin{notation}
    Throughout the paper, we will use the standard notation: for nonnegative functions $f(x),g(x)$, we write $f(x)=O(g(x))$ or $f(x)\ll g(x)$ if there is a constant $C$ such that $f(x)\leq C g(x)$ holds for all sufficiently large $x$. Further, we will use $f(x)=O_{\ell}(g(x))$ or $f(x)\ll_{\ell} g(x)$ to indicate that the implied constant $C$ depends on parameter(s) $\ell$. We write $f(x)=o(g(x))$ if $\lim_{x\rightarrow \infty}f(x)/g(x)=0$. 
\end{notation}

\section*{Acknowledgments}

We are very grateful to Frank Thorne for kindly answering our questions, as well as Subham Roy, Pavlo Yatsyna, and the rest of UFOCLAN group for our interesting and helpful discussions.

\section{Cubic fields and their class numbers}

Let $K$ be a cubic field, $\mco_K$ its ring of algebraic integers, and $h(K)$ its class number. 
The discriminant $\Delta_K$ of $K$ is of the form $df^2, 9df^2, \text{ or }81df^2$, where $d$ is a fundamental discriminant and $f$ is a squarefree positive integer coprime to $3$. More precisely, by \cite[Satz $6$]{Has}, if $\Delta_K$ is the discriminant of a cubic field, then $\Delta_K=df^2$ with $f=p_1\cdots p_n\cdot 3^w$, where $n\geq 0$, $w\in\{0,1,2\}$, and $p_i$ are distinct primes $\neq 3$ such that $$\left(\frac{d}{p_i}\right)\equiv p_i\pmod 3.$$ 
Recall also that $K/\Q$ is Galois if and only if $d=1$, i.e., $\Delta_K$ is a square.
 
One can characterize all  primes $p$ that are \textit{totally ramified} in $K$, i.e., $p\mco_K=\mathfrak{p}^3$ for a prime ideal  $\mathfrak{p}$  in $\mco_K$: $p\neq 3$ is {totally ramified} in $K$ if and only if $p$ divides $f$, and $p=3$ is totally ramified in $K$ if and only if $\Delta_K=9df^2 \text{ or }81df^2$, see \cite[Satz $5$]{Has} or \cite[Section $6.4.5$]{Coh}. 

\begin{lemma}[Hasse]\label{lem: p1mod3}
    Let $K$ be a cubic field of discriminant $\Delta_K=df^2$, and $p\neq2,3$ be totally ramified in $K$. Then $\left(\frac{d}{p}\right)=1$ if and only if $p\equiv 1\pmod 3$. 
\end{lemma}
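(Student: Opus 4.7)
The plan is to read this off directly from the Hasse characterization of cubic discriminants quoted in the preceding paragraph (Satz 6 of \cite{Has}). Since $p \neq 3$ is totally ramified in the cubic field $K$ of discriminant $\Delta_K = df^2$, that characterization tells us that $p$ must appear among the prime factors $p_1,\dots,p_n$ of $f$, and consequently it satisfies the congruence
\[
\left(\frac{d}{p}\right) \equiv p \pmod{3}.
\]
Everything else is a short case analysis on the residue of $p$ modulo $3$.

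First I would observe that, because $p \neq 2$, the Legendre symbol $\left(\frac{d}{p}\right)$ takes only the values $-1, 0, 1$. The congruence above rules out the value $0$, because $p$ is prime different from $3$, so $p \equiv 1$ or $2 \pmod{3}$, while $0 \not\equiv 1, 2 \pmod{3}$. (In particular, this recovers the familiar fact $\gcd(d,f)$ is coprime to $p$.) Hence $\left(\frac{d}{p}\right) \in \{\pm 1\}$.

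Next I split into the two cases. If $p \equiv 1 \pmod{3}$, then $\left(\frac{d}{p}\right) \equiv 1 \pmod{3}$; since $-1 \equiv 2 \not\equiv 1 \pmod{3}$, the only possibility is $\left(\frac{d}{p}\right) = 1$. Conversely, if $p \equiv 2 \pmod{3}$, then $\left(\frac{d}{p}\right) \equiv 2 \equiv -1 \pmod{3}$, so the only possibility is $\left(\frac{d}{p}\right) = -1$. Combining these two cases gives the claimed equivalence $\left(\frac{d}{p}\right) = 1 \iff p \equiv 1 \pmod{3}$.

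There is no real obstacle in this argument; the entire content of the lemma is packaged in the Hasse congruence $\left(\frac{d}{p}\right) \equiv p \pmod 3$, and the lemma just unwinds the fact that on $\{\pm 1\}$ this congruence is the same as matching signs with $p \bmod 3$. If anything, the only point worth flagging is the hypothesis $p \neq 2$, which is what allows us to view $\left(\frac{d}{p}\right)$ as an ordinary Legendre symbol and to conclude that it is $\pm 1$.
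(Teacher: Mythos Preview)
Your argument is correct: the lemma is an immediate consequence of the Hasse congruence $\left(\frac{d}{p}\right)\equiv p\pmod 3$ quoted from \cite[Satz~6]{Has} just above, and your case analysis on $\{\pm1\}$ is exactly the right unwinding. The paper itself gives no argument at all and simply refers the reader to \cite[pp.~576--577]{Has} and \cite[Chapter~6, Example~10]{Ish}, so your write-up is in fact more self-contained than the paper's treatment while relying on the same source.
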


\begin{proof}
    We refer the reader to \cite[Page $576, 577$]{Has} or \cite[Chapter $6$, Example $10$]{Ish}). 
\end{proof}
The \emph{genus field} of $K$ is defined as the maximal extension $K_{\text{gen}}$ of $K$ that is unramified at all finite primes and is compositum of $KF$, where $F$ is an abelian extension of $\Q$. The \textit{genus number} $g_K$ is defined as the degree $[K_{\text{gen}}:K]$. A theorem of Fröhlich gives a complete description of $g_K$ (see \cite{fro} or \cite[Chapter $6$, Example $10$]{Ish}).

\begin{theorem}[Fröhlich]\label{gen}
    Let $K$ be a cubic field and $e$ be the number of distinct odd primes $p$ such that 
 $p$ is totally ramified in $K$ and
 $\left(\frac{d}{p}\right)=1$.
 Then 
    \[
    g_K=\begin{cases}
        3^{e-1} \text{ if $K/\Q$ is Galois}\\
        3^e  \text{\ \ \   if $K/\Q$ is not Galois.}
    \end{cases}
    \]

\end{theorem}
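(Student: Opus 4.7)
My plan is to identify the genus field $K_{\text{gen}}$ with $KF$ for the largest abelian extension $F/\Q$ making $KF/K$ unramified at all finite primes, and then use Kronecker--Weber plus local class field theory to count. An abelian $F/\Q$ corresponds to a finite group of Dirichlet characters, and the question becomes which such characters $\chi$ can contribute; by working one cyclic component at a time I may assume $F = F_\chi = \Q(\chi)$ is itself cyclic.

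The key local constraint is that at each rational prime $p$ and each prime $\mathfrak{p}$ of $K$ above $p$, the inertia of $F_\chi/\Q$ at $p$ must embed into the inertia of $K/\Q$ at $\mathfrak{p}$. Since $[K:\Q] = 3$, this inertia has order $1$ or $3$, so after absorbing unramified twists I may restrict to cubic characters $\chi$ whose conductor is supported on primes totally ramified in $K$. I then check each potential ramified prime in turn. At $p = 2$, there is no cubic Dirichlet character of $2$-power conductor, since $(\mathbb{Z}/2^a)^\times$ has no element of order $3$, so $2$ contributes nothing even when ramified. At an odd prime $p \neq 3$ totally ramified in $K$, a cubic character of conductor $p$ exists iff $p \equiv 1 \pmod 3$, which by Lemma~\ref{lem: p1mod3} is precisely the condition $\left(\frac{d}{p}\right) = 1$; uniqueness of the tame totally ramified cubic extension of $\Q_p$ in this case guarantees that the local extensions $K_\mathfrak{p}$ and $(F_\chi)_p$ agree, so $KF_\chi/K$ is unramified above $p$. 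At $p = 3$, cubic characters of conductor $9$ exist and a similar but wildly ramified local analysis shows they contribute precisely when $\left(\frac{d}{3}\right) = 1$. Thus the admissible cubic characters form an $\mathbb{F}_3$-vector space $V$ of dimension exactly $e$, and the compositum $F$ of the corresponding cyclic cubic fields satisfies $[F:\Q] = 3^e$.

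Finally I relate $[F:\Q]$ to $[K_{\text{gen}}:K] = [KF:K]$. If $K/\Q$ is not Galois, then since $[K:\Q] = 3$ is prime and $K/\Q$ is not abelian, $K$ contains no intermediate subfield other than $\Q$, hence $F \cap K = \Q$ and $[KF:K] = [F:\Q] = 3^e$. If $K/\Q$ is Galois, then $K$ is cyclic cubic and corresponds to a cubic character $\chi_K$ whose conductor, by \cite[Satz $6$]{Has}, is exactly the product of the primes counted by $e$; consequently $\chi_K \in V$ and $K \subset F$, yielding $[KF:K] = [F:K] = 3^{e-1}$. The main obstacle I anticipate is the careful local analysis at $p = 3$, where wild ramification forces a more delicate matching of higher ramification data than the clean tame computation at odd $p \neq 3$; elsewhere the argument is a routine accounting of characters.
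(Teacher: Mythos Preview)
The paper does not actually prove this theorem: it is stated with attribution to Fr\"ohlich and with citations to \cite{fro} and \cite[Chapter~6, Example~10]{Ish}, and no argument is given. Your proposal therefore goes well beyond what the paper does, and the route you outline (Kronecker--Weber, reduction to cubic characters, local analysis prime by prime, then the $K\cap F$ computation) is essentially the standard genus-theory argument that underlies Fr\"ohlich's result.

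Two points in your sketch deserve tightening. First, at a tame prime $p\equiv 1\pmod 3$ there is \emph{not} a unique totally ramified cubic extension of $\Q_p$ as a subfield of $\overline{\Q_p}$: there are three, namely $\Q_p((u^ip)^{1/3})$ for $i=0,1,2$ with $u$ a non-cube unit, so ``$K_{\mathfrak p}$ and $(F_\chi)_p$ agree'' is not literally correct. What you actually need, and what is true, is that the compositum of any two of them is unramified over either, since it is obtained by adjoining a cube root of a unit. So your conclusion that $KF_\chi/K$ is unramified above $p$ stands, but the justification should be rephrased. Second, as you yourself flag, the case $p=3$ is only gestured at; there one has to separate the possibilities $\Delta_K=9df^2$ and $\Delta_K=81df^2$ and check directly that the cubic character of conductor $9$ yields an extension absorbed unramifiedly by $K_{\mathfrak p}$ precisely when $\left(\frac{d}{3}\right)=1$. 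That is the one place where real work remains beyond your outline.
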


We will further need a well-known result that relates the genus number with the class number.

 \begin{theorem}\label{divisibility}
     Let $K$ be a cubic field. Then $g_K$ divides $h(K)$.
 \end{theorem}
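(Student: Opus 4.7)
The plan is to realize $K_{\mathrm{gen}}$ as a subfield of the full Hilbert class field $H(K)$ of $K$, after which class field theory immediately yields the divisibility, since $[H(K):K]=h(K)$. Three properties of $K_{\mathrm{gen}}/K$ need to be checked: that it is abelian, that it is unramified at every finite prime, and that it is unramified at every infinite prime.

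The first two are essentially immediate from the definition. Since $K_{\mathrm{gen}} = KF$ for some abelian (hence Galois) extension $F/\Q$, the compositum $KF$ is Galois over $K$ and the restriction map identifies $\mathrm{Gal}(KF/K)$ with $\mathrm{Gal}(F/F\cap K)$, a subgroup of the abelian group $\mathrm{Gal}(F/\Q)$; thus $K_{\mathrm{gen}}/K$ is abelian. Unramifiedness at finite primes is built into the definition of the genus field. For the infinite primes, I would invoke Fröhlich's description (Theorem \ref{gen}) to note that $g_K = [K_{\mathrm{gen}}:K]$ is a power of $3$, and in particular odd. The inertia group at any real place of $K$ in an abelian extension has order $1$ or $2$, so an abelian extension of $K$ of odd degree cannot be ramified at any real place. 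Hence $K_{\mathrm{gen}}/K$ is unramified at every prime, so $K_{\mathrm{gen}} \subseteq H(K)$, and $g_K \mid h(K)$ follows.

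No serious obstacle arises. The only conceptual point is that the genus field is a priori only required to be unramified at finite primes while the Hilbert class field is unramified everywhere; this gap is closed automatically by the parity of $g_K$. An equivalent alternative would be to place $K_{\mathrm{gen}}$ inside the narrow Hilbert class field $H^{+}(K)$, deducing $g_K \mid h^{+}(K)$, and then to conclude from the fact that the index $h^{+}(K)/h(K)$ is a power of $2$ while $g_K$ is a power of $3$.
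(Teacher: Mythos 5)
Your argument is correct. The paper's own proof is shorter: it simply cites as well known (from Fr\"ohlich) that $g_K$ divides the \emph{narrow} class number $h^+(K)$, and then observes that $h^+(K)/h(K)$ is a power of $2$ while $g_K$ is a power of $3$ --- which is exactly the ``equivalent alternative'' you mention in your last sentence. Your primary route is a slightly more self-contained variant of the same idea: instead of quoting $g_K\mid h^+(K)$, you verify directly that $K_{\mathrm{gen}}/K$ is abelian and unramified at the finite places by definition, and then use the oddness of $g_K$ (via Theorem \ref{gen}) to rule out ramification at the real places, so that $K_{\mathrm{gen}}\subseteq H(K)$ and $g_K=[K_{\mathrm{gen}}:K]$ divides $[H(K):K]=h(K)$. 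In both versions the essential point is identical --- the only possible obstruction lives in the $2$-part (narrow versus wide, equivalently ramification at infinity), and $g_K$ being a $3$-power kills it --- so what your version buys is independence from the external reference for $g_K\mid h^+(K)$, at the cost of spelling out the standard facts that the decomposition group at an archimedean place of an abelian extension has order at most $2$ and that degrees of intermediate fields divide the degree of a Galois extension.
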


\begin{proof}
     It is well-known (e.g., see \cite[Paragraph $1$]{fro}) that $g_K$ divides \emph{narrow class number} $h^+(K)$. Since the class number and the narrow class number differ by a power of $2$, and $g_K$ is a power of $3$, it follows that $g_K$ divides $h(K)$. 
\end{proof}

In order to construct our families of consecutive cubic fields, we will work in the following setting that we fix in order to avoid repetition. Note that  $\varepsilon$ is different from the one in our main Theorem \ref{main:thm}.
 
\begin{setting S}\label{def: set s}
Fix a sign choice $\pm$, $\varepsilon>0$, and positive integers $k,H$. Let $n=\left\lceil\frac{\log H}{\log 3}\right\rceil+2$.

For all $1\leq i\leq k$, $1\leq j\leq n$, choose pairwise distinct primes  $p_{ij}\geq k$ and $q_i>\max(k, \left\lfloor \varepsilon^{-1}\right\rfloor+1)$ such that 
 $q_i\equiv1\pmod 4$ and 
 $p_{ij}\equiv 1 \pmod {12}$. Let $m=\prod_i q_i\prod_{i,j}p_{ij}^3$.

There is a unique integer $0\leq a<m$ such that
        \begin{equation}\label{A}
        \begin{aligned}
            a &\equiv q_i \prod_{j=1}^n p_{ij}^2 - i \pmod{q_i\prod_{j=1}^n p_{ij}^3}
        \end{aligned}
    \end{equation}
     for all $1\leq i\leq k$.
     Further, $p_{ij}^3\nmid a+i$ for all $i,j$, and $\frac{q_i-1}{q_i}>1-\varepsilon$ for all $i$.
\end{setting S}
    
\begin{proof}
By Chinese Remainder Theorem, there exists a unique solution $a\pmod m$ to \eqref{A}; we have
    \[
    a+i= q_i \prod_{j=1}^n p_{ij}^2 +t m, \ \  t\in \Z.
    \]
As $p_{ij}^3\mid m$, we see that $p_{ij}^3\nmid a+i$. Finally, since $q_i>\left\lfloor \varepsilon^{-1}\right\rfloor+1>\varepsilon^{-1}$ holds for all $i$; we have that $1-\frac{1}{q_i}>1-\varepsilon$.
\end{proof}

We can already establish the relevance of our setting for estimating class numbers.

\begin{proposition}\label{prop: largeclassno}
    Assume \hyperref[def: set s]{Setting (S)}. For $1\leq i\leq k$, if there exists a cubic field $K$ of discriminant $\Delta_K$ with $\Delta_K\equiv a+i \pmod m$, then $h(K)>H$.
\end{proposition}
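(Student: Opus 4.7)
The plan is to use Setting (S) to exhibit $n$ odd primes that are totally ramified in $K$ with Legendre symbol equal to $1$, then invoke Fr\"ohlich's Theorem \ref{gen} to bound $g_K$ from below, and finally conclude via $g_K \mid h(K)$ from Theorem \ref{divisibility}.

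First, I would extract the exact $p_{ij}$-adic valuation of $\Delta_K$. From $\Delta_K \equiv a+i \pmod m$ together with the congruence $a+i \equiv q_i \prod_{j=1}^{n} p_{ij}^{2} \pmod{q_i\prod_{j=1}^{n} p_{ij}^{3}}$ from \eqref{A} (valid since $q_i\prod_j p_{ij}^{3} \mid m$), one reads off $p_{ij}^2 \mid \Delta_K$. Reducing the same congruence modulo $p_{ij}^3$ and using the guarantee $p_{ij}^3 \nmid a+i$ from Setting (S), one gets $p_{ij}^3 \nmid \Delta_K$. Hence $p_{ij}^2 \| \Delta_K$ for every $j$. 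Since $p_{ij}\equiv 1\pmod{12}$ is odd and distinct from $3$, in the decomposition $\Delta_K \in \{df^2,\, 9df^2,\, 81df^2\}$ with $f$ squarefree coprime to $3$ and $d$ a fundamental discriminant (hence squarefree at odd primes), this forces $p_{ij}\mid f$. By the characterization of total ramification recalled before Lemma \ref{lem: p1mod3}, each $p_{ij}$ is therefore totally ramified in $K$. Because $p_{ij}\equiv 1 \pmod 3$, Lemma \ref{lem: p1mod3} then yields $\left(\tfrac{d}{p_{ij}}\right) = 1$.

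Consequently the integer $e$ in Theorem \ref{gen} satisfies $e \geq n$, so $g_K \geq 3^{n-1}$ in both the Galois and non-Galois cases. The choice $n = \lceil \log H / \log 3\rceil + 2$ in Setting (S) gives $3^{n-1}\geq 3H > H$, and Theorem \ref{divisibility} then concludes $h(K) \geq g_K > H$. There is no serious technical difficulty here once Setting (S) has been set up; the only point requiring a moment's care is confirming that $p_{ij}^2$ \emph{exactly} divides $\Delta_K$ (both divisibility and non-divisibility by the next power), so as to guarantee total ramification rather than a lesser ramification pattern. Note that the auxiliary primes $q_i$ play no role in this argument; they are presumably reserved for the later step where one must show that the class $a\pmod m$ is in fact realized by cubic discriminants often enough to make the statement non-vacuous.
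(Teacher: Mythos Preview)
Your proposal is correct and follows essentially the same route as the paper: establish $p_{ij}^2 \,\|\, \Delta_K$ to obtain total ramification at each $p_{ij}$, invoke Lemma~\ref{lem: p1mod3} with $p_{ij}\equiv 1\pmod 3$ to get $\left(\tfrac{d}{p_{ij}}\right)=1$, and then apply Theorems~\ref{gen} and~\ref{divisibility}. If anything, your handling of the exact $p_{ij}$-adic valuation and the consequence $p_{ij}\mid f$ is more explicit than the paper's own write-up.
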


\begin{proof}
    Suppose that there exists a cubic field $K$ of discriminant $\Delta_{K}$ with $\Delta_{K}\equiv a+i\pmod m$. Then $ \Delta_K=\pm q'_if_i^2$, where $f_i^2=\prod_{j=1}^n p_{ij}^2$ and $\pm q'_i=\pm q_i(1+t\prod_{i'\neq i}q_{i'}\cdot \prod_{j=1}^n p_{ij}\cdot \prod_{i'\neq i,j}p_{i'j}^3)$ is the fundamental discriminant. It is clear that all the primes $p_{i1},\dots,p_{in}$ divide $f_i$. Thus, for all $1\leq j\leq n$, primes $p_{ij}$ are totally ramified in $K$. From Lemma \ref{lem: p1mod3} and the choice $p_{ij}\equiv 1 \pmod {12}$, it follows that the fundamental discriminant $\pm q'_i$ also satisfies $\left(\frac{\pm q'_i}{p_{ij}} \right)=\left(\frac{\pm q_i}{p_{ij}} \right)=1$ for each $1\leq j\leq n$. Therefore, we can apply Theorem \ref{gen} to conclude that the genus number $g_{K}> 3^{n-1}>H$ because $n>\frac{\log H}{\log 3}+1$. Finally, as $g_{K}$ divides $h(K)$ by Theorem \ref{divisibility}, we have that $h(K)>H$. 
\end{proof}

\section{Counting cubic fields in an arithmetic progression}\label{s2}

We saw that, if there is a cubic field $K$ with discriminant $\pm\Delta_K$ among the consecutive integers $a+tm+1,\dots,a+tm+k$, then it has class numbers $h(K)>H$. However, it could easily happen that this statement is vacuous, as there could be no or very few such fields $K$.

To ensure the existence of our fields, we will use a result  of  Taniguchi--Thorne \cite[Theorem $6.2$]{TT} on the number of cubic fields in an arithmetic progression. In this section, we collect some necessary details from \cite{TT} and their helpful consequences. 

Fix a sign choice $\pm$. For $a, m \in \mathbb{Z}_{>0}$ and $X\in\R_{>0}$, define the counting function
\[
N_3^{\pm}(X; m, a) := \#\left\{ K \text{ cubic field } \mid 0 < \pm \Delta_K < X,\ \Delta_K \equiv a \pmod{m} \right\}.
\]
Here and in what follows, we are counting cubic fields up to isomorphism. There are always 1 or 3 isomorphic cubic fields (that are distinct as subfields of $\C$) depending on whether $K$ is Galois or not. 

Taniguchi–Thorne \cite[Equation 6.40]{TT} gave the following asymptotic formula:
    \begin{equation}\label{tt:asym}
        N_3^{\pm}(X;m,a) = C(m,a)\frac{C^{\pm}}{12\zeta(3)}  X + K_1(m,a)\frac{4K^\pm}{5\Gamma(2/3)^3}X^{5/6} + O(X^{7/9+\varepsilon}m^{8/9}),
    \end{equation}
where $C^-=3$, $C^+=1$, $K^-=\sqrt{3}$, and $K^+=1$. The constant $K_1(m,a)$ in the secondary term is quite complicated to state; thus, we refer the reader to \cite{TT} for a precise description of $K_1(m,a)$ (and, in fact, the error term can be improved further). 

The constant $C(m,a)$ in the main term is the density of discriminants of cubic fields in the arithmetic progression $a \pmod{m}$ as per \cite[Remark, Page $2498$]{TT} (note that \cite{TT} denoted this constant as $C_1(m,a)$).  In this paper, we are only interested in the question whether $C(m,a)>0$, because we see from \eqref{tt:asym} that $C(m,a)>0$ if and only if there exists $\delta>0$ such that $N_3^\pm (X;m,a)>\delta X$ holds for all sufficiently large $X$. 

In particular, when $\gcd(a,m) = 1$ and $m \not\equiv 0 \pmod{4}$, then we have \cite[Equation 6.41]{TT}
\begin{equation}\label{den:coprime}
    C(m,a)=\frac{1}{m} \prod_{p|m} \frac{1}{1-p^{-3}}>0.
\end{equation}

In general, the value of $C(m,a)$ is a bit more complicated: For a general arithmetic progression $ar$ modulo $mr$, where $\gcd(m,a)=1$ and $m,r$ need not be coprime, we can express the density $C(m,a)$ as a product of local densities at primes dividing $mr$ \cite[Page $2501$, Paragraph $1$]{TT}. These local densities can be computed using \cite[Proposition $6.4$]{TT}.

\begin{proposition}[{\cite[Proposition $6.4$]{TT}}]\label{p:tt}
    Let $p>3$ be a prime and $a$ be coprime to $p$. Then we have 
    \begin{equation}\label{den:loc}
        C(p^2,ap)= \frac{1}{p^2(1-p^{-3})}, \ \ 
        C(p^3,ap^2)= \frac{1+\phi_p(-3a)}{p^3(1-p^{-3})}, 
    \end{equation}
    where $\phi_p(-3a)=\left(\frac{-3a}{p}\right)$ is the Legendre symbol. 
\end{proposition}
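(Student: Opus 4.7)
Since this proposition is quoted verbatim from Taniguchi--Thorne, one natural option is simply to invoke \cite[Proposition 6.4]{TT}. Let me nevertheless sketch the strategy that underlies that proof.

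The plan is to translate the density into a $p$-adic volume computation via the Delone--Faddeev correspondence, which identifies cubic orders (up to isomorphism) with $GL_2(\Z)$-orbits of integral binary cubic forms $Ax^3+Bx^2y+Cxy^2+Dy^3$ and preserves discriminants on both sides. Cubic fields correspond precisely to \emph{maximal} cubic orders, a condition which is detectable locally at each prime. Consequently, $C(p^k, ap^{k-1})$ equals (up to the standard normalization by a $GL_2(\Z_p)$-invariant measure) the $p$-adic density of those maximal binary cubic forms over $\Z_p$ whose discriminant has $v_p = k-1$ and lies in the prescribed residue class modulo $p^k$. Both cases of the proposition reduce to stratifying forms by $v_p(\Delta)\in\{0,1,2\}$ and running a Davenport--Heilbronn style maximality analysis on the relevant stratum.

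For $v_p(\Delta)=1$, corresponding to partial ramification at $p$, a maximal form factors modulo $p$ as a linear factor times an irreducible quadratic; after a $GL_2(\Z_p)$-change of variables each nonzero residue class of the leading coefficient modulo $p$ contributes equally, and maximality is automatic. This yields the density $1/(p^2(1-p^{-3}))$, independent of $a$. For $v_p(\Delta)=2$, corresponding to total ramification, a maximal form reduces modulo $p$ to a cube of a linear form and, by a unipotent change of variables, can be put into Eisenstein-type shape $x^3+p\cdot g(x,y)$. A direct computation in this normal form shows that $\Delta/p^2 \bmod p$ is $-3$ times a square in $(\Z/p)^\times$, so the congruence $\Delta\equiv ap^2\pmod{p^3}$ is solvable exactly when $-3a$ is a square modulo $p$, accounting for the factor $1+\phi_p(-3a)$.

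The principal obstacle is the $p$-adic bookkeeping in the totally ramified stratum: one must simultaneously impose the maximality condition (which, once non-maximal orders are sieved away, produces the $(1-p^{-3})^{-1}$ factor) and pin down the discriminant modulo $p^3$, and these two conditions interact through the quadratic residue symbol $(-3a/p)$. The partially ramified stratum is by comparison routine, as no such interaction occurs, and the unramified stratum is not needed for the proposition. For a complete implementation of each step (explicit normal forms, the half-density contribution from maximality, and the evaluation of the resulting Igusa-type local integral), one follows \cite[Section 6]{TT}.
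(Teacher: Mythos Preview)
The paper does not prove this proposition at all: it is stated with the attribution \cite[Proposition~6.4]{TT} and used as a black box. Your opening sentence---simply invoking \cite[Proposition~6.4]{TT}---already matches the paper's treatment exactly, and the remainder of your proposal is additional exposition of the Taniguchi--Thorne argument that goes beyond what the paper provides.
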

Since for $p>3$, there are no cubic discriminants divisible by $p^3$, the local densities stated above are sufficient for computing $C(m,a)$. Note that $C(p^2,ap)$  never equals zero, but $C(p^3,ap^2)=0$ exactly when $-3a$ is a quadratic non-residue modulo $p$, for instance, $C(7^3, 3\cdot 7^2)=0$ (see \cite[Page $2503$]{TT}).

For the sake of convenience, we now establish several explicit estimate on the densities which follow immediately from \cite{TT}.

\begin{lemma}\label{l2}
Fix a sign choice $\pm$, $a, k\in\Z_{>0}$, and $\varepsilon>0$. Let $m=\prod_{i=1}^k p_i^{r_i}$, where $p_1,p_2,\dots,p_k$ are distinct primes $p_i>\max\left(3,\left\lfloor\varepsilon^{-1}\right\rfloor+1\right)$ and $r_i\in\{1,2,3\}$. For each $1\leq i\leq k$, suppose that $C(p_i^{r_i}, a)>0$. Then, we have 
\[
C(m,a)>\frac{1}{m}\left(1-\varepsilon \right)^t,
\]
where $t=\#\{i\mid r_i=1 \}$.
\end{lemma}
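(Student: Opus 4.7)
The plan is to use the multiplicativity of $C(m,a)$ across the distinct prime power components of $m$, which follows from the factorization of the density into local factors discussed just before Proposition \ref{p:tt}. Writing
\[
C(m,a) \;=\; \prod_{i=1}^{k} C(p_i^{r_i}, a),
\]
it remains to bound each local factor from below using Proposition \ref{p:tt} together with the coprime formula \eqref{den:coprime}. Every such local density has $p_i^{r_i}(1-p_i^{-3})$ in the denominator, so the factor $1/(1-p_i^{-3}) > 1$ is always free, and the task reduces to controlling the numerator case by case.

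I would then split by the value of $r_i$. When $r_i = 3$, Proposition \ref{p:tt} gives $C(p_i^3, a'p_i^2) = (1 + \phi_{p_i}(-3a'))/(p_i^3(1-p_i^{-3}))$ with $a \equiv a'p_i^2 \pmod{p_i^3}$; the positivity hypothesis forces $\phi_{p_i}(-3a') = 1$, so the local density exceeds $1/p_i^3$. When $r_i = 2$, Proposition \ref{p:tt} and the coprime formula give $C(p_i^2, a) \geq 1/(p_i^2(1-p_i^{-3})) > 1/p_i^2$ in the subcases compatible with the positivity hypothesis. When $r_i = 1$, two situations arise: either $p_i \nmid a$ and \eqref{den:coprime} gives $C(p_i,a) = 1/(p_i(1-p_i^{-3})) > 1/p_i$, or $p_i \mid a$ and summing Proposition \ref{p:tt} over nonzero residues modulo $p_i$, combined with the vanishing $\sum_{b \not\equiv 0 \,(p_i)} \phi_{p_i}(-3b) = 0$, yields $C(p_i,0) = (1-p_i^{-2})/(p_i(1-p_i^{-3}))$; using $p_i > \varepsilon^{-1}$ to bound $1 - p_i^{-2} > 1 - \varepsilon$, either subcase gives $C(p_i,a) > (1-\varepsilon)/p_i$.

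Multiplying the three classes of bounds across $i = 1, \dots, k$ produces $C(m,a) > (1-\varepsilon)^t/m$, as claimed. The main technical point is isolating the $(1-\varepsilon)$ loss to the $r_i = 1$ subcase in which $p_i \mid a$: this is the only place where the numerator of a local density drops (slightly) below $1$, and the hypothesis $p_i > \varepsilon^{-1}$ is precisely what makes this slight drop tolerable; everything else (the factorization of $C(m,a)$, the identification of each local factor, and the character sum) is routine given Proposition \ref{p:tt} and \eqref{den:coprime}.
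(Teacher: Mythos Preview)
Your proof is correct and follows essentially the same approach as the paper: factorize $C(m,a)$ into local densities, then bound each factor from below using \eqref{den:coprime} and Proposition~\ref{p:tt}. The only notable difference is that in the $r_i=1$, $p_i\mid a$ case you compute $C(p_i,0)=(1-p_i^{-2})/(p_i(1-p_i^{-3}))$ exactly via the character-sum cancellation, whereas the paper simply uses the cruder lower bound $C(p_i,0)\geq\sum_{j=1}^{p_i-1}C(p_i^2,jp_i)=(p_i-1)/(p_i^2(1-p_i^{-3}))$, discarding the $v_{p_i}(\Delta)=2$ contribution.
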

\begin{proof}
From \cite[Page $2501$, Paragraph $1$]{TT} and the assumption that $C(p_i^{r_i}, a)>0$, it follows that the density
    \begin{equation}\label{prod}
        C\left(m, a\right) = \prod_{i=1}^k C(p_i^{r_i}, a)>0.
    \end{equation}

From \eqref{den:coprime} and \eqref{den:loc}, we have that $C(p_i^{r_i}, a)\geq \frac{1}{p_i^{r_i}(1-p_i^{-3})}$ holds if $r_i\in\{2,3\}$, or if $r_i=1$ and $\gcd(a,p_i)=1$. 

To control $C(p_i, 0)$, we first observe that 
    \begin{equation}\label{est.for0}
        C(p_i, 0)\geq \sum_{j=1}^{p_i-1}C(p_i^2, jp_i)= \frac{p_i-1}{p_i^2(1-p_i^{-3})}.
    \end{equation}
 Thus, we have 
    \begin{equation}\label{localbound}
        C(p_i^{r_i}, a)\geq \begin{cases}
            \frac{1}{p_i^{r_i}(1-p_i^{-3})} \textnormal{ if } r_i\in\{2,3\}\\
            \frac{p_i-1}{p_i^2(1-p_i^{-3})} \textnormal{ if }r_i=1.
        \end{cases}
    \end{equation}
    
Let 
    \[
    \psi(p_i^{r_i})=\begin{cases}
        1 \textnormal{ if } r_i\in\{2,3\}\\
        \frac{p_i-1}{p_i} \textnormal{ if } r_i=1.
    \end{cases}
    \]
Now, using the bound \eqref{localbound} on local density in \eqref{prod}, we see that 
    \begin{equation*}
        C(m,a)\geq \prod_{i=1}^k \frac{\psi(p_i^{r_i})}{p_i^{r_i}(1-p_i^{-3})}\geq \frac{1}{m}\prod_{i=1}^k\frac{1}{1-p_i^{-3}}\prod_{i=1}^k\psi(p_i^{r_i}).
    \end{equation*}

Since each $p_i>\left\lfloor\varepsilon^{-1}\right\rfloor+1$, we get
    \begin{equation*}
        C(m,a)> \frac{1}{m}\prod_{i=1}^k\psi(p_i^{r_i})\geq \frac{1}{m}\left(1-\varepsilon \right)^t,
    \end{equation*}
    where $t=\#\{i\mid r_i=1 \}$.
\end{proof}

We now show that for each $i$, cubic fields with discriminant in our arithmetic progression $\Delta_{K}\equiv a+i \pmod m$ have reasonably high, positive density.

\begin{proposition}\label{prop: rowdensity}
Assume \hyperref[def: set s]{Setting (S)}. Then $C(m,a+i)>\frac{1-\varepsilon}{m}$ holds for all $1\leq i\leq k$.
\end{proposition}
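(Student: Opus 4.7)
The plan is to exploit the product formula $C(m, a+i) = \prod_{p \mid m} C(p^{v_p(m)}, a+i \bmod p^{v_p(m)})$ noted in \cite{TT} after \eqref{den:loc}, which reduces the proposition to a careful analysis of local densities at each prime-power factor of $m = \prod_{i'} q_{i'} \cdot \prod_{i',j'} p_{i'j'}^3$. First I would tabulate the residue of $a+i$ modulo each such prime power, using the CRT congruence defining $a$ in Setting (S): modulo $q_i$ it is $0$; modulo $q_{i'}$ for $i' \neq i$ it is $i - i' \not\equiv 0$ (since $|i-i'| < k \leq q_{i'}$); modulo $p_{ij}^3$ it equals $p_{ij}^2 c_{ij}$, where I set $c_{ij} := q_i \prod_{\ell \neq j} p_{i\ell}^2$; and modulo $p_{i'j'}^3$ for $i' \neq i$ the residue is a unit (its reduction mod $p_{i'j'}$ equals $i - i' \neq 0$).

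Plugging these residues into Proposition \ref{p:tt} together with the coprime-residue formula \eqref{den:coprime}, and bounding $C(q_i, 0)$ from below by the estimate \eqref{est.for0}, I would obtain
\[
C(m, a+i) \;\geq\; \frac{1}{m} \cdot \frac{q_i - 1}{q_i} \cdot \prod_{j=1}^n \bigl(1 + \phi_{p_{ij}}(-3 c_{ij})\bigr) \cdot \prod_{p \mid m}(1 - p^{-3})^{-1}.
\]
Once every Legendre symbol $\phi_{p_{ij}}(-3c_{ij})$ is shown to equal $+1$, the middle product becomes $2^n \geq 1$, the final product is $\geq 1$, and Setting (S) gives $(q_i - 1)/q_i > 1 - \varepsilon$; combining these yields the required $C(m, a+i) > (1 - \varepsilon)/m$.

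The crux is therefore the Legendre-symbol verification, which I expect to be the main obstacle. Since the $p_{i\ell}^2$ factors of $c_{ij}$ are perfect squares, $\left(\frac{-3 c_{ij}}{p_{ij}}\right) = \left(\frac{-3}{p_{ij}}\right)\left(\frac{q_i}{p_{ij}}\right)$. The hypothesis $p_{ij} \equiv 1 \pmod{12}$ forces $p_{ij} \equiv 1 \pmod 3$, so $\left(\frac{-3}{p_{ij}}\right) = 1$. For the remaining symbol I would invoke quadratic reciprocity: because $q_i \equiv 1 \pmod 4$, we have $\left(\frac{q_i}{p_{ij}}\right) = \left(\frac{p_{ij}}{q_i}\right)$, and this can be arranged to equal $+1$ by strengthening Setting (S) to additionally demand that $p_{ij}$ be a quadratic residue modulo each of the primes $q_{i'}$ already chosen. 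This is a congruence condition compatible with $p_{ij} \equiv 1 \pmod{12}$, so by CRT and Dirichlet's theorem on primes in arithmetic progressions infinitely many such primes $p_{ij}$ exist. With that selection in place, the rest of the argument is a routine assembly of local densities.
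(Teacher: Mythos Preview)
Your overall strategy---multiplicativity of $C(m,\cdot)$, tabulating the residue of $a+i$ at each prime-power factor of $m$, invoking Proposition~\ref{p:tt} and the bound~\eqref{est.for0}, then assembling via Lemma~\ref{l2}---matches the paper's. The divergence is precisely where you flag it: the verification that $\bigl(\tfrac{q_i}{p_{ij}}\bigr)=1$.

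The paper handles this step differently. Rather than add a hypothesis to Setting~(S), it invokes Lemma~\ref{lem: p1mod3}: in any cubic field $K$ with $\Delta_K\equiv q_i\prod_\ell p_{i\ell}^2\pmod{p_{ij}^3}$ the prime $p_{ij}$ is totally ramified, and since $p_{ij}\equiv 1\pmod3$, Hasse forces $\bigl(\tfrac{q_i}{p_{ij}}\bigr)=1$. But this presupposes the existence of such a cubic field, which is exactly what positivity of $C(p_{ij}^3,a+i)$ would give---so the argument is circular. Your instinct is correct: Setting~(S) as written does \emph{not} force $\bigl(\tfrac{q_i}{p_{ij}}\bigr)=1$. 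For instance, with $k=1$ and $\varepsilon>1/4$ the choices $q_1=5$, $p_{1j}=13$ are admissible, yet $\bigl(\tfrac{-3\cdot 5}{13}\bigr)=\bigl(\tfrac{11}{13}\bigr)=-1$, whence $C(13^3,5\cdot 13^2)=0$ by Proposition~\ref{p:tt} and the stated conclusion fails outright.

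Your proposed repair---augmenting Setting~(S) by requiring each $p_{ij}$ to be a quadratic residue modulo $q_i$ (modulo $q_i$ alone suffices; you do not need all $q_{i'}$)---is the natural fix, and your Dirichlet/CRT justification that such primes exist is sound. The cost is that you are no longer proving the proposition exactly as stated; but since the stated version is false for some admissible prime choices, that cost is unavoidable. After the fix, your assembly (either with the explicit $2^n$ factor or, more simply, via Lemma~\ref{l2} once each local density is positive) delivers the bound.
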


\begin{proof}
Fix an arbitrary integer $1\leq i\leq k$. 
We want to use Lemma \ref{l2} for the factorization 
$m=\prod_h q_h\prod_{h,j}p_{hj}^3$, and so we first need to show that $C(q_h,a+i)>0, C(p_{hj}^3,a+i)>0$ for all $h,j$.

First, let $h\neq i$. We claim that $\gcd(q_h,a+i)=1=\gcd(p_{hj}^3,a+i)$. If not, suppose $q_h\mid a+i$. As $q_h\mid a+h$, we get that $q_h\mid h-i$, which is impossible because $q_h> k$; the case of $p_{hj}$ is analogous. Now, from \eqref{den:coprime}, it follows that in these coprime arithmetic progressions the densities are
\begin{equation*}
    C(q_h,a+i)= \frac{1}{q_h(1-q_h^{-3})}>0, \textnormal{ and } \ \ 
    C(p_{hj}^3,a+i)= \frac{1}{p_{hj}^3(1-p_{hj}^{-3})}>0
\end{equation*}
as we want.

As $a+i \equiv 0 \pmod{q_i}$, \eqref{est.for0} implies that $C(q_i,a+i)=C(q_i,0)>0$.
    
    Finally, let us show that for each $1\leq j\leq n$, the density $C(p_{ij}^3, a+i)>0$. 
By Proposition \ref{p:tt}, we have that 
    \[
    C(p_{ij}^3, a+i)=C\left(p_{ij}^3, q_i \prod_{j=1}^n p_{ij}^2 \right)= \frac{1+\phi_{p_{ij}}(-3q_ip_{i1}^2\dots p_{i{j-1}}^2p_{i{j+1}}^2\dots p_{in}^2)}{p_{ij}^2(1-p_{ij}^{-3})}.
    \]
    To show $C(p_{ij}^3, a+i)>0$, we thus want to show that the Legendre symbol satisfies
    \begin{align*}
    1&=\phi_{p_{ij}}(-3q_ip_{i1}^2\dots p_{i{j-1}}^2p_{i{j+1}}^2\dots p_{in}^2)\\
    &=\left(\frac{-3q_ip_{i1}^2\dots p_{i{j-1}}^2p_{i{j+1}}^2\dots p_{in}^2}{p_{ij}}\right)=\left(\frac{-3q_i}{p
    _{ij}}\right)\\
    &=\left(\frac{-1}{p_{ij}}\right)\left(\frac{3}{p_{ij}}\right)\left(\frac{q_i}{p_{ij}}\right).
    \end{align*}
    
    The prime $p_{ij}$ is totally ramified in the cubic field of discriminant $\Delta_K$ satisfying $\Delta_K\equiv q_i \prod_{j=1}^n p_{ij}^2 \pmod{p_{ij}^3}$. Since $p_{ij}\neq 2,3$, from Lemma \ref{lem: p1mod3} it follows that $q_i$ is quadratic residue modulo $p_{ij}$. Thus, we have $\left(\frac{q_i}{p_{ij}}\right)=1$. As $p_{ij}\equiv 1 \pmod{12}$, we have
    $\left(\frac{-1}{p_{ij}}\right)=1$,  and by quadratic reciprocity $\left(\frac{3}{p_{ij}}\right)=\left(\frac{p_{ij}}{3}\right)=1$. 

Now, to estimate the density, we apply Lemma \ref{l2} for our $m=\prod_h q_h\prod_{h,j}p_{hj}^3$ to get $C(m,a+i)>\frac{1}{m}\left(1-\varepsilon\right)^t$. Since only $q_i\|a+i$, we have that $C(m,a+i)>\frac{1-\varepsilon}{m}$.
\end{proof}

\section{Density estimates}

In this section we prove our main theorem. To do that, we will need to transition between counting fields and  counting discriminants. 

\begin{theorem}\label{multbound}
    For every $\varepsilon>0$, and for every positive integer $\Delta> 1$ there are $\ll_\varepsilon \Delta^{\kappa+\varepsilon}$ cubic fields $K$ with $\Delta_K=\Delta$.
\end{theorem}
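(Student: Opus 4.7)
The plan is to invoke the strategy of Ellenberg--Venkatesh \cite[Corollary~$3.7$]{ev} essentially verbatim, but to substitute the stronger $3$-torsion bound of Chan--Koymans \cite{ck} in place of the older one they used. No new idea beyond this substitution is required.

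First I would dispose of the Galois case. By \cite[Satz~$6$]{Has}, a Galois cubic field has square discriminant $f^2$ with $f$ a product of distinct primes $\equiv 1\pmod 3$ (possibly times $9$), and it is a cyclic cubic subfield of the ray class field of $\Q$ of conductor $f$; by the classification of cyclic cubic fields (conductor-discriminant formula) the number of such fields is $O_\varepsilon(\Delta^\varepsilon)$, well inside the claimed bound.

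For the non-Galois case, the starting point is that the Hasse decomposition $\Delta=df^2$, $9df^2$, or $81df^2$ with $d$ a fundamental discriminant and $f$ squarefree coprime to $3$ determines the pair $(d,f)$ uniquely in each of the three cases, so given $\Delta$ there are at most three candidate values of $d$. For each such $d$, Hasse's Satz~$7$ parametrises non-Galois cubic fields $K$ with $\Delta_K=\Delta$ in terms of index-$3$ subgroups of a ray class group of $\Q(\sqrt d)$ with conductor supported on primes dividing $f$, satisfying prescribed ramification conditions. Following the class field theoretic argument in Ellenberg--Venkatesh's proof of \cite[Corollary~$3.7$]{ev}, one controls the $3$-rank of the relevant ray class group by the $3$-rank of $\cl(\Q(\sqrt d))$ plus a contribution of at most $3$ from each prime dividing $f$, yielding
\[
\#\{K\text{ non-Galois cubic}\mid \Delta_K=\Delta\}\;\ll_\varepsilon\; f^{\varepsilon}\cdot \#\cl(\Q(\sqrt d)),
\]
where the factor $f^\varepsilon$ absorbs the tame contribution $3^{\omega(f)}$ from primes dividing $f$.

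Finally I would apply the Chan--Koymans bound $\#\cl(\Q(\sqrt d))\ll_\varepsilon |d|^{\kappa+\varepsilon}$. Since $|d|\le \Delta$ and $f\le\sqrt\Delta$, the right-hand side above is $\ll_\varepsilon \Delta^{\kappa+2\varepsilon}$, and after renaming $\varepsilon$ this proves the theorem. The only step that requires genuine bookkeeping is the Ellenberg--Venkatesh reduction, where one must verify that the local conditions at primes dividing $f$ contribute only the tame factor $f^\varepsilon$; but this is exactly the content of their Corollary~$3.7$, and nothing more has to be done.
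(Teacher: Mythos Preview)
Your proposal is correct and follows exactly the route the paper takes: the paper's own proof is a one-line citation of Hasse \cite[Satz~7]{Has} together with the Chan--Koymans bound, and the surrounding discussion in the introduction spells out the Ellenberg--Venkatesh reduction just as you do. Your write-up simply fills in the details that the paper leaves implicit.
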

\begin{proof}
    This follows from the work of Hasse \cite[Satz $7$]{Has} using Chan--Koymans \cite{ck} bound $\#\cl(K)\ll_\varepsilon |\Delta_K|^{\kappa+\varepsilon}$ for all quadratic fields $K$. Recall that, throughout the paper, $\kappa\approx 0.3193\dots$ is the constant from \cite{ck}.
\end{proof}

\begin{theorem}\label{thm:estimate}
    Fix a sign choice $\pm$, $k\in\Z_{>0}$, and $\varepsilon>0$. Suppose that for integers $0\leq a<m$, there exists a $\delta>0$ such that $N_3^\pm(X;m,a+i)>2\delta X$ holds for all $1\leq i\leq k$ and for all sufficiently large $X$. Then, there exists a positive constant $c$ (depending on $\pm, \varepsilon, a, m,k$) such that for all sufficiently large $X$, there are at least $c\delta X^{1-\kappa-\varepsilon}$ integers $t$ with $0\leq t\leq X$ such that 
     $$\#\{K \text{ cubic field }\mid \pm \Delta_K\in\{a+tm+1,a+tm+2,\dots,a+tm+k\}\}> \delta km.$$ 
\end{theorem}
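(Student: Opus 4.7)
The plan is a straightforward double-counting argument leveraging the hypothesis against the multiplicity bound in Theorem~\ref{multbound}. Set $T:=\lfloor X/m\rfloor$ and, for each $0\le t\le T$, define
\[
N_t \;:=\; \#\{K \text{ cubic field} \mid \pm\Delta_K\in\{a+tm+1,\ldots,a+tm+k\}\}.
\]
Every cubic field counted by $N_3^{\pm}(X;m,a+i)$, for some $1\le i\le k$, has $\pm\Delta_K$ lying in exactly one of these windows (up to an $O(1)$ boundary correction coming from the last window possibly extending past $X$). Summing over $i$ and invoking the hypothesis, for all sufficiently large $X$,
\[
\sum_{t=0}^{T} N_t \;\ge\; \sum_{i=1}^{k} N_3^{\pm}(X;m,a+i) \;-\; O(1) \;>\; 2k\delta X.
\]

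For the matching upper bound, I would apply Theorem~\ref{multbound}: each of the $k$ integers $a+tm+i$ (with $1\le i\le k$) is at most $X+O(m)$, hence is realized as $\pm\Delta_K$ by at most $C_\varepsilon X^{\kappa+\varepsilon}$ non-isomorphic cubic fields. Therefore,
\[
N_t \;\le\; k\,C_\varepsilon X^{\kappa+\varepsilon} \qquad \text{for every } 0\le t\le T.
\]
Now let $S\subseteq\{0,1,\ldots,T\}$ be the set of ``good'' indices, i.e., those $t$ with $N_t>\delta km$; these are precisely the $t$ we are trying to count. Splitting the sum $\sum_t N_t$ according to membership in $S$ and using both bounds yields
\[
2k\delta X \;<\; \sum_{t=0}^{T} N_t \;\le\; |S|\cdot k\,C_\varepsilon X^{\kappa+\varepsilon} \;+\; (T+1)\cdot \delta km.
\]

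Since $(T+1)\,\delta km \le \delta kX + \delta km$, the factor $2$ in the hypothesis $>2\delta X$ absorbs this trivial contribution, and rearranging gives
\[
|S| \;\ge\; \frac{\delta X - O(m)}{C_\varepsilon X^{\kappa+\varepsilon}} \;\gg_{\varepsilon,a,m,k}\; \delta\, X^{1-\kappa-\varepsilon},
\]
which is the required conclusion. I don't anticipate any serious obstacle: the only point needing care is to confirm the sign-convention bookkeeping (so that the fields counted by $N_3^{\pm}(X;m,a+i)$ are the same as those whose $\pm\Delta_K$ lies in the window $\{a+tm+1,\ldots,a+tm+k\}$), but this is immediate from the global sign convention fixed earlier in the paper. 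The factor $2$ in the hypothesis is precisely what makes the two contributions separable, and the exponent $1-\kappa-\varepsilon$ comes directly from the multiplicity bound of Theorem~\ref{multbound}.
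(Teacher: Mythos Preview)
Your argument is correct and is essentially the same double-counting (Maier-matrix) argument as the paper's own proof: both sum the column counts to get $\sum_t N_t > 2k\delta X$, bound each $N_t$ by $kC_\varepsilon X^{\kappa+\varepsilon}$ via Theorem~\ref{multbound}, and split into good/bad rows using the factor~$2$. The only cosmetic difference is that you index rows by $t\le T=\lfloor X/m\rfloor$ whereas the paper takes $t\le X$ (and correspondingly applies the hypothesis at $\approx Xm$ rather than $X$); since your good $t$'s lie in $\{0,\ldots,T\}\subseteq\{0,\ldots,X\}$ and the resulting count is still $\gg_{\varepsilon,a,m,k}\delta X^{1-\kappa-\varepsilon}$, this reparametrization is harmless.
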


\begin{rk}
    Note that the number of cubic fields among $a+tm+1,a+tm+2,\dots,a+tm+k$ could be small when $\delta$ is small. However, in our main theorem we use specific $\delta$ obtained in Proposition \ref{prop: rowdensity}, that will ensure that there are many fields.
\end{rk}

\begin{proof}
    We use a variant of Maier matrix method \cite{maier} (see also  \cite{shiu} or the survey articles \cite{th,ag}). For a positive integer $X$, consider the matrix
    \[
    \begin{bmatrix}
        a+1 & a+2 & a+3 & \cdots & a+k \\
    a+m+1 & a+m+2 & a+m+3 & \cdots & a+m+k \\
    a+2m+1 & a+2m+2 & a+2m+3 & \cdots & a+2m+k \\
    \vdots & \vdots & \vdots & \ddots & \vdots \\
    a+Xm+1 & a+Xm+2 & a+Xm+3 & \cdots & a+Xm+k 
    \end{bmatrix}
    \]
    of integers. As a notational shorthand, we will say that a subset $S$ of integers in the matrix contains $>B$ cubic fields if $\#\{K \text{ cubic field }\mid \pm\Delta_K\in S\}>B$. An entry in the matrix will be called a \emph{good entry} if it is a cubic discriminant. 
    
    By our assumption that there exists a $\delta>0$ such that $N_3^\pm(X;m,a+i)>2\delta X$ holds for all $1\leq i\leq k$ and for all sufficiently large $X$. It follows that 
    \[
    N_3^\pm(a+Xm+i+1;m,a+i)>2\delta (a+Xm+i+1)>2\delta mX
    \]
    holds for all $1\leq i\leq k$ and for all sufficiently large $X$. That is, for all sufficiently large $X$, each column  contains $>2\delta mX$ cubic fields. Thus, in total, there are $>2\delta kmX$ cubic fields in the matrix.
    
      We now claim that for all sufficiently large $X$, there exists a row that contains $>2\delta km$ cubic fields. Suppose for contradiction that each row contains $\leq 2\delta km$ cubic fields. Then it follows that the total number of cubic fields in the matrix is $\leq 2\delta kmX$, which leads to a contradiction with the fact that there are $>2\delta kmX$ cubic fields in the matrix.

    Let us call a row containing $>\delta km$ cubic fields a \emph{good row}, and let us denote by $G$ the total number of good rows. Thanks to the above argument, for all sufficiently large $X$, we know that there exists at least one good row in the matrix, i.e., $G\geq1$.  

    Now, for a sufficiently large $X$, our aim is to count the number of  good rows. That is, we want to count integers $t$ with $0\leq t\leq X$ such that the row $a+tm+1,a+tm+2,\dots, a+tm+k$ is a good row. For a given $\varepsilon>0$, by Theorem \ref{multbound} there exists a positive constant $c_1$ (depending on $\pm, \varepsilon$) such that for each good entry $\pm(a+tm+i)$ there are $<c_1(a+tm+i)^{\kappa+\varepsilon}$ cubic fields with discriminant $\pm(a+tm+i)$. Since the number of good entries in a row $\leq k$, we see that a good row contains $< c_1k(a+Xm+k)^{\kappa+\varepsilon}$ cubic fields. As there are $>2\delta kmX$ cubic fields in the matrix, we have $$2\delta kmX<c_1Gk(a+Xm+k)^{\kappa+\varepsilon}+(X-G)\delta km.$$ Then, it follows that
    \begin{equation*}
    \begin{aligned}
        \delta kmX<Gk (c_1(a+Xm+k)^{\kappa+\varepsilon}-\delta m)
        <Gk (c_1(a+Xm+k)^{\kappa+\varepsilon})< Gkc_2mX^{\kappa+\varepsilon},
    \end{aligned}
    \end{equation*}
    where $c_2$ (which depends on $\pm, \varepsilon, a, m, k$) is a positive constant. Thus, we have $G>c\delta X^{1-\kappa-\varepsilon}$, where $c=1/c_2$. This completes the proof.
\end{proof}

\begin{theorem}\label{thm:final}
    Assume \hyperref[def: set s]{Setting (S)}.  There exists a positive constant $c$ (depending on $\pm, \varepsilon, k, H$) such that for all sufficiently large $X$, there are at least $cX^{1-\kappa-\varepsilon}$ positive integers $t\leq X$ such that the following hold:
    \begin{enumerate}
        \item[(1)] For each $1\leq i\leq k$, if there exists a cubic field $K$ with $\pm\Delta_K=a+tm+i$, then $h(K)>H$.
        \item[(2)] We have
        \[
        \#\{K \text{ cubic field }\mid \pm \Delta_K\in\{a+tm+1,a+tm+2,\dots,a+tm+k\}\}\geq \frac{k}{2}.
        \]
    
    \end{enumerate}
\end{theorem}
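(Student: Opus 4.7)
The plan is to combine Proposition \ref{prop: largeclassno} with Theorem \ref{thm:estimate}, using Proposition \ref{prop: rowdensity} to supply the density hypothesis of the latter. Statement (1) is essentially free from the setup: for every $t \ge 0$ and $1 \le i \le k$, a cubic field $K$ with $\pm\Delta_K = a+tm+i$ automatically satisfies $\pm\Delta_K \equiv a+i \pmod m$, so Proposition \ref{prop: largeclassno} gives $h(K) > H$. This holds unconditionally in $t$, hence in particular on any set of $t$'s produced in the course of proving (2).

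For (2), I would apply Theorem \ref{thm:estimate} with a parameter $\delta$ chosen so that $\delta km \ge k/2$, i.e., $\delta m \ge 1/2$. By the Taniguchi--Thorne asymptotic \eqref{tt:asym}, the hypothesis $N_3^\pm(X; m, a+i) > 2\delta X$ of Theorem \ref{thm:estimate} reduces to $2\delta < C(m, a+i)\cdot C^\pm/(12\zeta(3))$, so $\delta m \ge 1/2$ translates into the requirement $m\cdot C(m, a+i) > 12\zeta(3)/C^\pm$. The weak bound $C(m, a+i) > (1-\varepsilon)/m$ stated in Proposition \ref{prop: rowdensity} is not on its own enough for this, but the proof of that proposition supplies the sharper information: at each of the $n$ prime powers $p_{ij}^3 \| m$, the verification $\phi_{p_{ij}}(-3 q_i \prod_{j'\ne j}p_{ij'}^2) = 1$ combined with Proposition \ref{p:tt} yields the local density $2/(p_{ij}^3(1-p_{ij}^{-3}))$, twice the ``generic'' value, while the other prime-power factors of $m$ each contribute roughly the generic value. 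Accumulating these $n$ doublings gives $m \cdot C(m, a+i) \gg 2^n$, and since Setting (S)'s prescription $n = \lceil \log H/\log 3\rceil + 2$ is only a lower bound (the underlying requirement being $3^{n-1} > H$), we may take $n$ large enough to ensure $2^n \ge 12\zeta(3)/C^\pm$, so that $\delta = 1/(2m)$ is an admissible choice.

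With this $\delta$, Theorem \ref{thm:estimate} delivers a positive constant $c'$ (depending on $\pm, \varepsilon, a, m, k$) such that, for all sufficiently large $X$, at least $c'\delta X^{1-\kappa-\varepsilon}$ integers $0 \le t \le X$ satisfy
\[
\#\{K \text{ cubic field} \mid \pm\Delta_K \in \{a+tm+1,\ldots,a+tm+k\}\} > \delta km \ge k/2.
\]
Statement (1) holds for each such $t$ by the initial observation, and absorbing $\delta$ and $c'$ into a single positive constant $c$ (which depends only on $\pm,\varepsilon,k,H$ via $a,m$ from Setting (S)) gives the stated bound $cX^{1-\kappa-\varepsilon}$. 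The main obstacle I anticipate is the quantitative density step: sharpening Proposition \ref{prop: rowdensity} via the explicit local density computation at each $p_{ij}^3$ and verifying that the $2^n$ boost clears the threshold $12\zeta(3)/C^\pm$ is the one genuinely delicate ingredient; after that, everything is a mechanical assembly of the previously established results.
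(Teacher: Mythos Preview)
Your overall architecture is the same as the paper's: part (1) is immediate from Proposition \ref{prop: largeclassno}, and part (2) is obtained by feeding the density bound of Proposition \ref{prop: rowdensity} into Theorem \ref{thm:estimate}. The paper simply reads Proposition \ref{prop: rowdensity} as $N_3^\pm(X;m,a+i)>\frac{1-\varepsilon'}{m}X$, applies Theorem \ref{thm:estimate} with $2\delta=(1-\varepsilon')/m$, obtains a row count $>\frac{(1-\varepsilon')k}{2}$, and rounds up to $\ge k/2$ by taking $\varepsilon'$ small relative to $1/k$.

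You are right to be uneasy with that step: Proposition \ref{prop: rowdensity} bounds the relative density $C(m,a+i)$, and by \eqref{tt:asym} the leading coefficient of $N_3^\pm(X;m,a+i)$ is $C(m,a+i)\cdot C^\pm/(12\zeta(3))$, not $C(m,a+i)$ itself. Your remedy---observing that the proof of Proposition \ref{prop: rowdensity} actually verifies $\phi_{p_{ij}}=1$ and hence, via Proposition \ref{p:tt}, $C(p_{ij}^3,a+i)=2/(p_{ij}^3(1-p_{ij}^{-3}))$ for each of the $n$ primes $p_{i1},\dots,p_{in}$, so that $m\cdot C(m,a+i)>2^n(1-\varepsilon)$---is a genuine sharpening that restores the desired inequality $N_3^\pm(X;m,a+i)>X/m$ as soon as $2^n(1-\varepsilon)>12\zeta(3)/C^\pm$, i.e.\ once $n\ge 4$.

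The only loose end is that Setting (S) \emph{fixes} $n=\lceil \log H/\log 3\rceil+2$ rather than prescribing a lower bound, so for $H\le 8$ one has $n\le 3$ and cannot simply ``take $n$ large enough'' without altering the hypothesis. This is cosmetic for the paper's purposes: one may replace the $+2$ in Setting (S) by $+4$ with no change elsewhere, or note that Theorem \ref{main:thm} for a given $H$ follows from Theorem \ref{thm:final} applied with any $H'\ge H$. Apart from this, your argument is correct and, in tracking the Davenport--Heilbronn constant, more careful than the paper's own proof.
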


\begin{proof} 
    \textit{(1)} From Proposition \ref{prop: largeclassno}, we conclude that for $1\leq i\leq k$, if there exists a cubic field $K$ with $\pm\Delta_K=a+tm+i$, then it has class number $>H$.
    
     By Proposition \ref{prop: rowdensity}, we have that $N_3^\pm(X; m, a+i)>\left(\frac{1-\varepsilon'}{m} \right) X$ holds for all $\varepsilon'>0$, all $1\leq i\leq k$, and all sufficiently large $X$.
    Now, by Theorem \ref{thm:estimate}, for all sufficiently large $X$, there are at least $cX^{1-\kappa-\varepsilon}$ integers $t$ with $0\leq t\leq X$ such that 
     $$\#\{K \text{ cubic field }\mid \pm \Delta_K\in\{a+tm+1,a+tm+2,\dots,a+tm+k\}\}> \left(\frac{1-\varepsilon'}{2m} \right) km=\left(\frac{1-\varepsilon'}{2} \right) k,$$ 
     where $c=c'\cdot \left(\frac{1-\varepsilon'}{m} \right)$, and  $c'$ is the constant from Theorem \ref{thm:estimate}.   
     Choosing $\varepsilon'$ sufficiently small relative to $1/k$, we obtain \textit{(2)}.
\end{proof}

This establishes Theorem \ref{main:thm}. Note that it is not hard to optimize the choices of $\varepsilon, q_i,p_{ij}$ in \hyperref[def: set s]{Setting (S)} in order to obtain an explicit upper bound on $a$ and $m$ in Theorem \ref{thm:final}. However, in absence of an explicit bound on the constant $c$ (that comes from Theorem \ref{multbound} and \cite{ck}), this would not give us an explicit statement on the admissible integers $d$ in Theorem \ref{main:thm}.

\bibliographystyle{amsalpha}
	
\bibliography{Citation} 

\end{document}